\newcommand{\R}
 {\mathbb{R}}
\begin{document}

\title[Terms concentrating on boundary]{A nonlinear elliptic problem with terms \\ concentrating  in the boundary}

\author[G. S. Arag\~ao, A. L. Pereira and M. C. Pereira]
{Gleiciane S. Arag\~ao$^\star$, Ant\^onio L. Pereira$^\diamond$ and Marcone C. Pereira$^\dagger$}

\thanks{$^\star$Partially supported by FAPESP 2010/51829-7, Brazil, $^\diamond$Partially
supported by CNPq 308696/2006-9, FAPESP 2008/55516-3, Brazil, $^\dagger$Partially
supported by CNPq 305210/2008-4, FAPESP 2008/53094-4, 2010/18790-0 and 2011/08929-3, Brazil}

\address{Gleiciane S. Arag\~ao \hfill\break
Universidade Federal de S\~ao Paulo - Diadema - Brazil}
\email{gleiciane.aragao@unifesp.br}

\address{Ant\^onio L. Pereira \hfill\break
Instituto de Matem\'atica e Estat\'istica \\
Universidade de S\~ao Paulo - S\~ao Paulo - Brazil}
\email{alpereir@ime.usp.br}

\address{Marcone C. Pereira \hfill\break
Escola de Artes, Ci\^encias e Humanidades\\
Universidade de S\~ao Paulo - S\~ao Paulo - Brazil}
\email{marcone@usp.br}

\date{}

\subjclass[2000]{35J91,34B15,35J75} 
\keywords{Semilinear elliptic equations, nonlinear boundary conditions, singular elliptic equations, upper semicontinuity, concentrating terms, oscillatory behavior.} 

\begin{abstract}

In this paper we investigate the behavior of a family of steady state solutions of a nonlinear reaction diffusion equation when some reaction and potential terms are concentrated in a $\epsilon$-neighborhood of a portion $\Gamma$ of the boundary. We assume that this $\epsilon$-neighborhood shrinks to $\Gamma$ as the small parameter $\epsilon$ goes to zero. Also, we suppose the upper boundary of this $\epsilon$-strip presents a highly oscillatory behavior. Our main goal here is to show that this family of solutions converges to the solutions of a limit problem, a nonlinear elliptic equation that captures the oscillatory behavior. Indeed, the reaction term and concentrating potential are transformed into a flux condition and a potential on $\Gamma$, which depends on the oscillating neighborhood.

\end{abstract}

\maketitle
\numberwithin{equation}{section}
\newtheorem{theorem}{Theorem}[section]
\newtheorem{lemma}[theorem]{Lemma}
\newtheorem{corollary}[theorem]{Corollary}
\newtheorem{proposition}[theorem]{Proposition}
\newtheorem{remark}[theorem]{Remark}
\allowdisplaybreaks

\section{Introduction}

In this work we analyze the behavior of a family of steady state solutions of a homogeneous Neumann problem for a nonlinear reaction diffusion equation when some reaction and potential terms are concentrated in a $\epsilon$-neighborhood of a subset $\Gamma$ of the boundary that shrinks to $\Gamma$ as the small parameter $\epsilon$ goes to zero.
Roughly, we are assuming that some reactions of the system occur only in an extremely thin region near the border with oscillating upper boundary. 
We show that in some sense this singular problem can be approximated by a nonlinear elliptic system with nonlinear boundary conditions where the oscillatory behavior of the neighborhood is captured as a flux condition and a potential term on the portion $\Gamma$ of the boundary.

To describe the problem, let $\Omega = (0,1) \times (0,1)$ be the open square in $\R^2$ and let $\Gamma \subset \partial \Omega$ be the line segment in $\R^2$ given by
$$
\Gamma = \{ (x,0) \in \R^2 \; : \; x \in (0,1) \}.
$$ 
We consider the following uniformly bounded $\epsilon$-neighborhood of $\Gamma$ with oscillatory upper boundary
$$
\omega_\epsilon = \{ (x,y) \in \R^2 \; : \; x \in (0,1) \quad \mbox{and} \quad 0 < y < \epsilon \, G_\epsilon (x) \}.
$$ 
Here we assume that $G_\epsilon(\cdot)$ is a function satisfying $0 < G_0 \leq G_\epsilon(\cdot) \leq G_1$ for fixed positive constants $G_0$ and $G_1$ which oscillates as the small parameter $\epsilon \to 0$. 
This is expressed by assuming that 
\begin{equation} \label{defG}
G_\epsilon(x) =G(x,x/\epsilon).
\end{equation}
The function $G : (0, 1) \times \R \mapsto \R$ is a positive smooth function, with $y \to G(x, y)$ periodic in $y$ for fixed $x$ with period $l(x)$ uniformly bounded in $(0,1)$, that is, $0<l_0<l(\cdot)<l_1$.
Let us observe that our assumptions includes 
the case where the function $G_\epsilon$ presents a purely periodic behavior, for instance, $G_\epsilon(x) = 2 + cos(x/\epsilon)$. 
But it also considers 
the case where the function $G_\epsilon$ defines a strip where the oscillations period, the amplitude and the profile vary with respect to $x \in (0, 1)$. 
See Figure \ref{fig1} below that illustrates the oscillating strip $\omega_\epsilon \subset \Omega$.

\begin{figure}[htp] 
\centering \scalebox{0.6}{\includegraphics{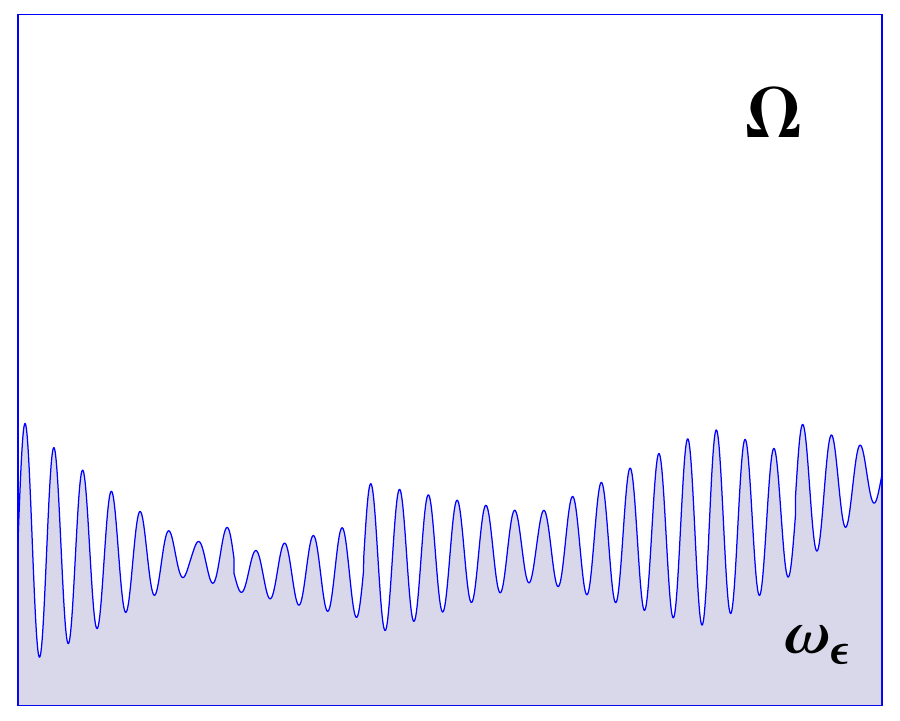}}
\caption{The open square $\Omega$ and the $\epsilon$-strip $\omega_\epsilon$.}
\label{fig1} 
\end{figure}

We are interested in  the behavior of the solutions of the nonlinear elliptic problem
\begin{eqnarray}
\label{1}
\left\{ 
\begin{gathered}
- \Delta u^{\epsilon}  + \lambda u^{\epsilon} + \frac{1}{\epsilon} \mathcal{X}_{\omega_{\epsilon}} \, V_\epsilon(\cdot) \, u^{\epsilon} 
= \frac{1}{\epsilon} \mathcal{X}_{\omega_{\epsilon}} \, f_0(\cdot,u^\epsilon) + f_1(\cdot,u^\epsilon) \quad \textrm{ in }  \Omega \\
     \frac{\partial u^{\epsilon}}{\partial n}=0 \quad \textrm{ on } \partial \Omega
        \end{gathered} \right.
\end{eqnarray}
where $\mathcal{X}_{\omega_{\epsilon}}$ is the characteristic function of the set $\omega_{\epsilon}$, $n$ denotes the unit outward normal vector to $\partial \Omega$ and $\lambda >0$ is a suitable real number. 
The nonlinearities $f_0, f_1: \mathcal{O} \times \R \mapsto \R$ are continuous in both variables and $\mathcal{C}^2$ in the second one,  where $\mathcal{O} \subset \R^2$ is an open set containing $\bar \Omega$. 
We also assume that there exists $C>0$ independent of $\epsilon$ such that the family of potential $V_{\epsilon} \in L^\infty(\Omega)$ satisfies
\begin{eqnarray}
\label{hip1}
\frac{1}{\epsilon} \int_{\omega_{\epsilon}} \left|V_{\epsilon}(x,y) \right|^{2} \, dx dy \leq C.
\end{eqnarray} 
Also, we suppose there exists a function $V_{0}\in L^{2}(\Gamma)$ which is the limit of the concentrating term 
\begin{eqnarray}
\label{hip2}
\lim_{\epsilon\to 0} \frac{1}{\epsilon} \int_{\omega_{\epsilon}} V_{\epsilon} \, \varphi \, d\xi = \int_{\Gamma}V_{0} \, \varphi \, dS, \quad \mbox{$\forall \varphi \in C^{\infty}(\bar{\Omega})$}.
\end{eqnarray}    

In our model, we use the characteristic function $\mathcal{X}_{\omega_{\epsilon}}$ and the small positive parameter $\epsilon$ to express the concentration on the region $\omega_\epsilon \subset \Omega$ by the term 
$$
\frac{1}{\epsilon} \mathcal{X}_{\omega_{\epsilon}} \in L^\infty(\Omega).
$$
Since $\omega_\epsilon \subset (0,1) \times (0, \epsilon \, G_1)$ is thin and it is ``approaching'' the line segment $\Gamma \subset \partial \Omega$, it is reasonable to expect that the family of solutions $u^\epsilon$ will converge to a
solution of  an equation of the same type with nonlinear boundary condition on $\Gamma$.
Indeed, we will show that, for $\lambda$ big enough, the solutions of (\ref{1}) converge in $H^{1}(\Omega)$ to the solutions of the nonlinear elliptic problem
\begin{equation}
\label{2}
\left\{ 
\begin{gathered}
       -\Delta u + \lambda u = f_1(\cdot,u) \quad \textrm{ in } \Omega \\
       \frac{\partial u}{\partial n} + V_{0}(\cdot) u = \mu(\cdot) \, f_0(\cdot,u)  \quad \textrm{ on } \Gamma \\
       \frac{\partial u}{\partial n} = 0  \quad \textrm{ on } \partial\Omega \setminus \Gamma\\
        \end{gathered} \right.
\end{equation}
where the boundary coefficient $\mu \in L^\infty( \Gamma)$ is related to the oscillating function $G_\epsilon$ and is given by 
\begin{equation} \label{mu}
\mu(x) = \frac{1}{l(x)} \int_0^{l(x)} G(x,y) \, dy, \quad \forall x \in (0,1).
\end{equation}

As previously mentioned, we obtain a limit problem with nonlinear boundary condition that captures the oscillatory behavior of the upper boundary of the set $\omega_{\epsilon}$. In fact, its nonlinear boundary condition involves the function $\mu(x)$, that is, the mean value of $G(x,\cdot)$ for each $x \in (0,1)$.
 We now summarize these assertions precisely as our  main result.
\begin{theorem} \label{main}
Let $u^\epsilon$ be a family of solutions of the problem \eqref{1} satisfying $\| u^\epsilon \|_{L^\infty(\Omega)} \leq R$ for some positive constant $R$ independent of $\epsilon$.
Then, there exists $\lambda^* \in \R$ independent of $\epsilon$ such that, for all $\lambda > \lambda^*$,  there exists a subsequence, still defined by $u^\epsilon$, and a function $u \in H^1(\Omega)$, with $\|u\|_{L^\infty(\Omega)} \leq R$, solution of the problem \eqref{2} satisfying $\| u^\epsilon - u\|_{H^1(\Omega)} \to 0$ as $\epsilon \to 0$.
%
\end{theorem}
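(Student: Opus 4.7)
The plan is to work with the weak formulation of \eqref{1}: for every $\varphi \in H^1(\Omega)$,
$$
\int_\Omega \nabla u^\epsilon \cdot \nabla \varphi + \lambda \int_\Omega u^\epsilon \varphi + \frac{1}{\epsilon}\int_{\omega_\epsilon} V_\epsilon u^\epsilon \varphi = \frac{1}{\epsilon}\int_{\omega_\epsilon} f_0(\cdot,u^\epsilon)\varphi + \int_\Omega f_1(\cdot,u^\epsilon)\varphi,
$$
and argue in three stages: (i) a uniform $H^1$ bound, (ii) weak passage to the limit identifying the limit equation, and (iii) strong $H^1$ convergence.

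For (i), I would choose $\varphi = u^\epsilon$ and use the hypothesis $\|u^\epsilon\|_{L^\infty} \leq R$ to estimate the nonlinear right-hand side. Since $f_0(\cdot,u^\epsilon)$ and $f_1(\cdot,u^\epsilon)$ are uniformly bounded, and $|\omega_\epsilon|/\epsilon \leq G_1$, the concentrating reaction term $\frac{1}{\epsilon}\int_{\omega_\epsilon} f_0(\cdot,u^\epsilon)u^\epsilon$ is controlled by a constant depending only on $R$. The potential term is bounded via Cauchy--Schwarz and \eqref{hip1}: $\frac{1}{\epsilon}\int_{\omega_\epsilon}|V_\epsilon||u^\epsilon|^2 \leq R^2 \bigl(\frac{|\omega_\epsilon|}{\epsilon}\bigr)^{1/2}\bigl(\frac{1}{\epsilon}\int_{\omega_\epsilon}|V_\epsilon|^2\bigr)^{1/2} \leq CR^2$. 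Choosing $\lambda^* > 0$ large enough to absorb this bound into the coercive part gives $\|u^\epsilon\|_{H^1(\Omega)} \leq K(R)$, uniformly in $\epsilon$.

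For (ii), Rellich--Kondrachov yields a subsequence with $u^\epsilon \rightharpoonup u$ in $H^1(\Omega)$, $u^\epsilon \to u$ strongly in $L^2(\Omega)$ (and with trace convergence in $L^2(\partial\Omega)$). Passing to the limit in the linear terms $\int_\Omega \nabla u^\epsilon\cdot\nabla\varphi$ and $\lambda\int_\Omega u^\epsilon \varphi$ is standard, and $\int_\Omega f_1(\cdot,u^\epsilon)\varphi \to \int_\Omega f_1(\cdot,u)\varphi$ by dominated convergence together with the $L^\infty$ bound and $\mathcal{C}^2$-regularity of $f_1$. The core issue is passing to the limit in the two concentrating terms. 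For $\frac{1}{\epsilon}\int_{\omega_\epsilon} f_0(\cdot,u^\epsilon)\varphi$, I expect to invoke a concentration lemma (presumably proved earlier in the paper or in a companion reference) which, combined with the averaging induced by the oscillation of $G_\epsilon(x) = G(x,x/\epsilon)$, produces the weight $\mu(x) = \frac{1}{l(x)}\int_0^{l(x)} G(x,y)\, dy$ from \eqref{mu} and gives $\int_\Gamma \mu \, f_0(\cdot,u)\varphi\, dS$. For the potential term I would split
$$
\frac{1}{\epsilon}\int_{\omega_\epsilon} V_\epsilon u^\epsilon \varphi = \frac{1}{\epsilon}\int_{\omega_\epsilon} V_\epsilon(u^\epsilon-u)\varphi + \frac{1}{\epsilon}\int_{\omega_\epsilon} V_\epsilon u\varphi,
$$
controlling the first piece by Cauchy--Schwarz, \eqref{hip1}, and a trace-type estimate on the thin strip (the classical bound $\frac{1}{\epsilon}\|w\|_{L^2(\omega_\epsilon)}^2 \leq C\|w\|_{H^1(\Omega)}^2$ applied to $w=u^\epsilon-u$), and applying \eqref{hip2} to the second with $u\varphi$ suitably smoothed or approximated.

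For (iii), the upgrade to strong $H^1$ convergence follows by testing \eqref{1} with $u^\epsilon$, passing to the limit on the right-hand side using the convergences just established, and comparing with the limiting weak formulation tested against $u$; this yields $\int_\Omega (|\nabla u^\epsilon|^2 + \lambda|u^\epsilon|^2) \to \int_\Omega(|\nabla u|^2 + \lambda|u|^2)$, which combined with weak convergence implies strong convergence in $H^1$. The main obstacle I anticipate is a clean treatment of the oscillating concentration: handling the combination of shrinking strip and high-frequency boundary oscillation requires the concentration/averaging lemma with $\mu$ as above, and careful justification that the nonlinearity $f_0(\cdot,u^\epsilon)$ may be replaced by $f_0(\cdot,u)$ on $\omega_\epsilon$ in the limit, which is where both the $L^\infty$ bound on $u^\epsilon$ and the trace-type strip estimate on $u^\epsilon-u$ play the decisive role.
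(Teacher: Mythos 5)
Your overall skeleton coincides with the paper's: a uniform $H^1$ bound obtained by testing with $u^\epsilon$, extraction of a weak $H^1$ limit, identification of the limit equation with the weight $\mu$ coming from the averaging lemma, and finally strong convergence via convergence of norms and weak lower semicontinuity. The averaging statement you defer to is indeed available (the paper's Lemma \ref{lem2}, built on Lemma \ref{LGE}), and your a priori bound, which uses the $L^\infty$ hypothesis together with \eqref{hip1} instead of the paper's uniform coercivity of $a_\epsilon$ plus the uniform $H^{-s}$ bound on $F_\epsilon$, is acceptable.

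However, there is a genuine gap in the step where you pass to the limit in the concentrated terms with the \emph{varying} function $u^\epsilon$. To kill the remainder $\frac{1}{\epsilon}\int_{\omega_\epsilon} V_\epsilon (u^\epsilon-u)\varphi$ (and likewise to replace $f_0(\cdot,u^\epsilon)$ by $f_0(\cdot,u)$ on $\omega_\epsilon$, and to treat the right-hand side when the test function is $u^\epsilon$ itself in stage (iii)), you invoke the strip estimate $\frac{1}{\epsilon}\|w\|_{L^2(\omega_\epsilon)}^2 \leq C\|w\|_{H^1(\Omega)}^2$ with $w=u^\epsilon-u$. This only yields an $O(1)$ bound, not an $o(1)$ one, because $u^\epsilon-u$ converges merely \emph{weakly} in $H^1(\Omega)$, so $\|u^\epsilon-u\|_{H^1(\Omega)}$ does not tend to zero; as written the argument does not close. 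The missing ingredient, which is exactly what the paper's Lemma \ref{lem1} (concentrated integrals controlled by $\|\cdot\|_{H^s(\Omega)}$ for $1/2<s\leq 1$) is designed for, is to use the fractional estimate $\frac{1}{\epsilon}\int_{\omega_\epsilon}|u^\epsilon-u|^2\,d\xi \leq C\|u^\epsilon-u\|_{H^{s}(\Omega)}^2$ with $1/2<s<1$ together with the \emph{compact} embedding $H^1(\Omega)\hookrightarrow H^{s}(\Omega)$: along the subsequence, weak $H^1$ convergence upgrades to strong $H^{s}$ convergence, and only then does the concentrated $L^2$ distance vanish. The paper packages this as operator-norm convergence $T_\epsilon\to T_0$ for the potential (Lemma \ref{Potential}) and as continuity of $F_{0,\epsilon},F_0$ on $H^1$ with the weak topology via Lipschitz $H^{s}\to H^{-s}$ estimates (Lemma \ref{lem1eq}); some such device is needed at this point of your argument, and with it the rest of your plan (including the final norm-convergence step and the inheritance of the bound $\|u\|_{L^\infty(\Omega)}\leq R$ from a.e.\ convergence) goes through.
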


Since we are concerned with solutions which are  uniformly bounded  in $L^\infty(\Omega)$, we may assume that the nonlinearities $f_0$ and $f_1$ are $\mathcal{C}^2$-functions with bounded derivatives in the second variable.
Indeed, we may perform a cut-off in $f_0$ and $f_1$ outside the region 
 $|u| \leq R$,  without modifying any of  these solutions.

This kind of problem was initially studied in~\cite{arrieta} where linear elliptic equations posed on $\mathcal{C}^2$-regular domains were considered. There, the $\epsilon$-neighborhood is a strip of width $\epsilon$ with base in a portion of the boundary without oscillatory behavior. Later, the asymptotic behavior of the attractors of a parabolic problem were analyzed in~\cite{anibal,anibal2}, where the upper semicontinuity of attractors at $\epsilon=0$ was proved. The same technique of~\cite{arrieta} has been used in~\cite{gleicesergio1,gleicesergio2}, where the results of~\cite{arrieta,anibal} were extended to reaction-diffusion problems with delay.
 
The goal of our work is to extend some results of~\cite{arrieta} to nonlinear elliptic problems when the upper boundary of the strip $\omega_{\epsilon}$ presents a highly oscillatory behavior. We also use some ideas of~\cite{marcone1,marcone2,marcone3} where elliptic and parabolic problems defined in thin domains with a highly oscillatory boundary have been extensively studied. It is important to note that in our work the boundary $\partial \Omega$ of the domain $\Omega$ is only Lipschitz . 


\section{Some technical results}
\label{tecnicos}
  
In this section we describe some technical results that will be needed in the proof of the main result.
We  initially analyze how our concentrating integrals converge to boundary integrals. 
We adapt the results of \cite{arrieta} on convergence of concentrated integrals using that  $0<G_{0}\leq G_\epsilon(\cdot)\leq G_{1}$ uniformly in $\epsilon >0$.

\begin{lemma}
\label{lem1}
Suppose that $v\in H^{s}(\Omega)$ with $1/2<s\leq 1$ and $s-1\geq - 1/q$. Then, for small $\epsilon_{0}$, there exists a constant $C>0$ independent of $\epsilon$ and $v$ such that for any $0<\epsilon\leq\epsilon_{0}$, we have
$$
\frac{1}{\epsilon}\int_{\omega_{\epsilon}}\left|v\right|^{q} \, d\xi \leq C \left\|v\right\|^{q}_{H^{s}(\Omega)}.
$$
\end{lemma}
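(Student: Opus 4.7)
The plan is to reduce the estimate to a family of one-dimensional slice bounds on horizontal lines close to $\Gamma$, and then control each slice uniformly using a trace/embedding argument for $H^s$ functions. Since $\omega_\epsilon \subset (0,1)\times(0,\epsilon G_1)$, the substitution $y=\epsilon t$ yields
$$
\frac{1}{\epsilon}\int_{\omega_\epsilon} |v|^q \, d\xi
\;\leq\; \frac{1}{\epsilon}\int_0^1 \int_0^{\epsilon G_1} |v(x,y)|^q \, dy\, dx
\;=\; \int_0^{G_1} \|v(\cdot,\epsilon t)\|_{L^q(0,1)}^q \, dt,
$$
so it suffices to prove a uniform estimate $\|v(\cdot,c)\|_{L^q(0,1)} \leq C\|v\|_{H^s(\Omega)}$ for all $c\in[0,\epsilon_0 G_1]$, with $C$ independent of $c$ and $v$.

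Since $\partial\Omega$ is Lipschitz, I would fix a Stein extension operator $E:H^s(\Omega)\to H^s(\R^2)$ satisfying $\|Ev\|_{H^s(\R^2)}\leq C\|v\|_{H^s(\Omega)}$, and set $\tilde v = Ev$. Because $s>1/2$, the usual trace theorem gives $\tilde v(\cdot,0)\in H^{s-1/2}(\R)$ with $\|\tilde v(\cdot,0)\|_{H^{s-1/2}(\R)}\leq C\|\tilde v\|_{H^s(\R^2)}$. Applying the same inequality to the vertical translate $\tilde v(x,y+c)$, whose $H^s(\R^2)$-norm equals that of $\tilde v$, promotes the bound to
$$
\|\tilde v(\cdot,c)\|_{H^{s-1/2}(\R)} \;\leq\; C\|v\|_{H^s(\Omega)}
\qquad \text{uniformly in } c\in\R.
$$

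The hypothesis $s-1\geq -1/q$ rewrites as $s-\tfrac12\geq \tfrac12-\tfrac{1}{q}$, which is exactly the one-dimensional Sobolev embedding $H^{s-1/2}(\R)\hookrightarrow L^q(\R)$. Hence
$$
\|v(\cdot,c)\|_{L^q(0,1)} \;\leq\; \|\tilde v(\cdot,c)\|_{L^q(\R)} \;\leq\; C\|\tilde v(\cdot,c)\|_{H^{s-1/2}(\R)} \;\leq\; C\|v\|_{H^s(\Omega)},
$$
and plugging this into the opening display and integrating in $t\in(0,G_1)$ gives the desired bound, with constant depending only on $G_1$ and the trace/embedding constants.

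The main delicate point is the uniformity in $c$ of the slice trace estimate. It is resolved by first extending to all of $\R^2$ (which is why the Lipschitz character of $\partial\Omega$ is used) and then exploiting translation invariance of the global $H^s(\R^2)$-norm; without that extension one could only prove the slice bound for $c=0$, which is insufficient because the slice at $y=\epsilon t$ lies strictly inside $\Omega$. A small density argument reduces everything to the smooth case so that $\tilde v(\cdot,c)$ can be interpreted pointwise in $c$; alternatively, Fubini applied to $|\tilde v|^q$ shows the slice is defined for a.e.\ $c$, which suffices for the integrated inequality.
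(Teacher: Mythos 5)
Your proof is correct, but it follows a different route from the paper. The paper's entire argument consists of your first step: since $\omega_\epsilon$ is contained in the non-oscillating rectangle $r_\epsilon=(0,1)\times(0,\epsilon G_1)$, the concentrated integral over $\omega_\epsilon$ is majorized by the one over $r_\epsilon$, and the estimate for such straight strips is then quoted directly from Lemma 2.1 of the Arrieta--Jim\'enez-Casas--Rodr\'iguez-Bernal paper \cite{arrieta}. You instead prove that strip estimate from scratch: rescaling $y=\epsilon t$ to reduce to uniform slice bounds, extending by a Stein-type operator to $H^s(\R^2)$ (legitimate here since $\Omega$ is Lipschitz and $1/2<s\leq 1$), using translation invariance of the $H^s(\R^2)$ norm to get the trace bound $\|\tilde v(\cdot,c)\|_{H^{s-1/2}(\R)}\leq C\|v\|_{H^s(\Omega)}$ uniformly in $c$, and concluding with the one-dimensional embedding $H^{s-1/2}(\R)\hookrightarrow L^q(\R)$, which is exactly what the hypothesis $s-1\geq -1/q$ provides; your closing remarks on a.e.\ definedness of slices and density handle the measure-theoretic fine print adequately (Fatou along a smooth approximating sequence suffices). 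What your approach buys is a self-contained proof that makes visible where each hypothesis enters (the Lipschitz boundary for the extension, $s>1/2$ for the trace, $s-1\geq -1/q$ for the embedding), at the cost of length; the paper's approach buys brevity by outsourcing the analytic core to \cite{arrieta}. One small caveat: the global embedding $H^{s-1/2}(\R)\hookrightarrow L^q(\R)$ requires $q\geq 2$; for $q<2$ you should instead use H\"older on the bounded slice $(0,1)$ (via the $L^2$ bound), a one-line fix that covers the full range of the statement, and in any case the paper only uses $q=2$ and $q=4$.
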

\begin{proof}
Note that
$$
\frac{1}{\epsilon}\int_{\omega_{\epsilon}}\left|v\right|^q d\xi\leq \frac{1}{\epsilon}\int_{r_{\epsilon}}\left|v\right|^q d\xi,
$$
where $r_{\epsilon}$ is the strip of width $\epsilon \, G_{1}$ and base in $\Gamma$ without oscillatory behavior given by
$$
r_{\epsilon}=\left\{(x,y)\in \mathbb{R}^{2} \; : \;  x\in (0,1) \quad \mbox{and} \quad 0< y<\epsilon G_{1}\right\}. 
$$
Since $\omega_{\epsilon}$ is contained in $r_{\epsilon}$, the result follows from ~\cite[Lemma 2.1]{arrieta}.
\end{proof}

As a consequence of Lemma \ref{lem1} and \cite[Lemma 2.5]{arrieta} we  obtain:
\begin{lemma} \label{Potential}
Suppose that the family $V_{\epsilon}$ satisfies (\ref{hip1}) and (\ref{hip2}). Then, for $s>1/2$, $\sigma>1/2$ and $s+\sigma>3/2$, if we define the operators $T_{\epsilon}: H^{s}(\Omega) \rightarrow (H^{\sigma}(\Omega))'$ by
$$
\left\langle T_{\epsilon}(u),\varphi\right\rangle=\frac{1}{\epsilon}\int_{\omega_{\epsilon}}V_{\epsilon} \, u \, \varphi \, d\xi, \quad \textrm { for }  \epsilon>0,   \quad \mbox{ and } \quad  \left\langle T_{0}(u),\varphi\right\rangle=\int_{\Gamma} V_{0} \, u \, \varphi \, dS,
$$
we have $T_{\epsilon} \to T_{0}$ in $\mathcal{L}(H^{s}(\Omega),(H^{\sigma}(\Omega))')$.
\end{lemma}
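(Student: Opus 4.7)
The plan is to verify the hypotheses of \cite[Lemma 2.5]{arrieta} in our oscillatory setting, using Lemma~\ref{lem1} to dominate every $\omega_\epsilon$-integral by one over the uniform strip $r_\epsilon$. Concretely I would proceed in three steps: uniform boundedness of $T_\epsilon$, pointwise convergence on a dense subspace, and an upgrade to operator-norm convergence by compactness.

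For uniform boundedness, given $u\in H^s(\Omega)$ and $\varphi\in H^\sigma(\Omega)$, apply Cauchy--Schwarz together with hypothesis~\eqref{hip1}, and then split $|u\varphi|^2$ by H\"older with conjugate exponents $p,p'$:
$$
|\langle T_\epsilon u,\varphi\rangle|^2 \leq C\,\Bigl(\tfrac{1}{\epsilon}\int_{\omega_\epsilon}|u|^{2p}\,d\xi\Bigr)^{1/p}\Bigl(\tfrac{1}{\epsilon}\int_{\omega_\epsilon}|\varphi|^{2p'}\,d\xi\Bigr)^{1/p'}.
$$
Lemma~\ref{lem1} controls each factor by $\|u\|_{H^s}^{2}$, respectively $\|\varphi\|_{H^\sigma}^{2}$, provided $2p(1-s)\leq 1$ and $2p'(1-\sigma)\leq 1$; adding these inequalities and using $1/p+1/p'=1$ gives exactly $s+\sigma\geq 3/2$, so the strict assumption $s+\sigma>3/2$ together with $s,\sigma>1/2$ leaves room for a valid choice of $p,p'$ and yields $\|T_\epsilon\|_{\mathcal{L}(H^s,(H^\sigma)')}\leq C$ uniformly in $\epsilon$. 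The companion bound for $T_0$ follows from the trace theorem combined with $V_0\in L^2(\Gamma)$.

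For pointwise convergence on a dense set, take $u,\varphi\in C^\infty(\bar\Omega)$: the product $u\varphi$ is an admissible test function in~\eqref{hip2}, so $\langle T_\epsilon u,\varphi\rangle\to\langle T_0 u,\varphi\rangle$. Density of smooth functions in $H^s$ and $H^\sigma$ together with the uniform bound extends this to strong (pointwise) convergence $\langle T_\epsilon u,\varphi\rangle\to\langle T_0u,\varphi\rangle$ for all $u\in H^s(\Omega)$ and $\varphi\in H^\sigma(\Omega)$.

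Finally, to upgrade to norm convergence, I would select auxiliary exponents $s'\in(1/2,s)$ and $\sigma'\in(1/2,\sigma)$ still satisfying $s'+\sigma'>3/2$ (possible because the original inequalities are strict) and repeat Step~1 to see that $T_\epsilon-T_0$ is uniformly bounded from $H^{s'}(\Omega)$ into $(H^{\sigma'}(\Omega))'$. Then given any sequences bounded in $H^s$ and $H^\sigma$, compactness of $H^s\hookrightarrow H^{s'}$ and $H^\sigma\hookrightarrow H^{\sigma'}$ extracts subsequences converging in the weaker norms, and the uniform bound converts the strong convergence from Step~2 into uniform convergence over the unit balls. I expect this last step to be the main obstacle, since it is precisely where one needs to pass from pointwise to operator-norm convergence; it corresponds to the content of \cite[Lemma 2.5]{arrieta} for the non-oscillating strip $r_\epsilon$, and the whole argument carries over because Lemma~\ref{lem1} dominates every $\omega_\epsilon$-integral by an $r_\epsilon$-integral uniformly in $\epsilon$.
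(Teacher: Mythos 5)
Your proposal is correct and takes essentially the same route as the paper, whose entire proof is the remark that the lemma follows from Lemma \ref{lem1} together with \cite[Lemma 2.5]{arrieta}: your three steps (uniform bounds via \eqref{hip1}, H\"older and Lemma \ref{lem1} under $s+\sigma>3/2$; pointwise convergence from \eqref{hip2} on smooth products plus density; and the upgrade to operator-norm convergence through intermediate exponents $s'<s$, $\sigma'<\sigma$ and compact embeddings) are precisely the content of that cited lemma, transplanted to the oscillating strip by the domination $\omega_\epsilon\subset r_\epsilon$. No gap to report.
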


We also need some results about weak limits of rapidly oscillating functions.
\begin{lemma} \label{LGE}
If $G_{\epsilon}$  is defined as in (\ref{defG}), then
$$
G_\epsilon(\cdot)  \to  \mu(\cdot) = \frac{1}{l(\cdot)}\int_0^{l(\cdot)}G(\cdot,s) \,ds \quad  w^*-L^\infty(0,1).
$$
\end{lemma}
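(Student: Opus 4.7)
The plan is to exploit the uniform bound $0 < G_0 \le G_\epsilon \le G_1$ and to reduce, by a partition argument, to the classical averaging result for functions with constant period. Since $\{G_\epsilon\}$ is uniformly bounded in $L^\infty(0,1)$ and $C([0,1])$ is dense in $L^1(0,1)$, it suffices to establish
\[
\int_0^1 G_\epsilon(x)\, \varphi(x)\, dx \to \int_0^1 \mu(x)\, \varphi(x)\, dx
\]
for every $\varphi \in C([0,1])$.

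Fix such a $\varphi$ and $\delta > 0$, and partition $[0,1]$ into subintervals $I_j = [x_j, x_{j+1})$ of length $\delta$, with $N \sim 1/\delta$. Uniform continuity of $G$ on $[0,1]\times[0,l_1]$, and of $l$, $\mu$, $\varphi$ on $[0,1]$, provides a modulus $\eta(\delta) \to 0$ controlling $\sup_{x\in I_j,\, y\in\R}|G(x,y)-G(x_j,y)|$, $|l(x)-l(x_j)|$, and $|\varphi(x)-\varphi(x_j)|$ for $x\in I_j$. Using these, I would write
\[
\int_{I_j} G(x, x/\epsilon)\, \varphi(x)\, dx \;=\; \varphi(x_j) \int_{I_j} G(x_j, x/\epsilon)\, dx \;+\; \mathcal{E}_j ,
\]
with $|\mathcal{E}_j| \le C\,\eta(\delta)\,|I_j|$ uniformly in $\epsilon$ and $j$.

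On $I_j$ the function $x \mapsto G(x_j, x/\epsilon)$ is $\epsilon\, l(x_j)$-periodic. Splitting $I_j$ into an integer number of full periods plus a remainder of length less than $\epsilon l_1$, and using that the integral over one full period equals $\epsilon \int_0^{l(x_j)} G(x_j, y)\, dy = \epsilon\, l(x_j)\, \mu(x_j)$, one obtains
\[
\int_{I_j} G(x_j, x/\epsilon)\, dx = \mu(x_j)\, |I_j| + O(\epsilon).
\]
Summing over $j$ produces an aggregate remainder $O(N\epsilon) = O(\epsilon/\delta)$, so
\[
\int_0^1 G_\epsilon\, \varphi\, dx \;=\; \sum_{j} \mu(x_j)\, \varphi(x_j)\, |I_j| \;+\; O(\eta(\delta)) \;+\; O(\epsilon/\delta),
\]
whose first term is a Riemann sum for $\int_0^1 \mu\, \varphi\, dx$.

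Taking $\epsilon \to 0$ first (so $O(\epsilon/\delta)$ disappears) and then $\delta \to 0$ (which kills $\eta(\delta)$ and the Riemann sum error) yields the conclusion. The main subtlety is the $x$-dependence of the period $l(x)$: the classical two-scale convergence result does not apply directly, because the microscopic variable $x/\epsilon$ is not evaluated on a function with a single fixed period. The discretization resolves this by freezing both the profile $G(x,\cdot)$ and the period $l(x)$ on intervals of size $\delta$, which is justified by their uniform continuity; the order of limits, $\epsilon \to 0$ before $\delta \to 0$, is essential.
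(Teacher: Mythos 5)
Your argument is correct and follows essentially the same skeleton as the paper's proof: reduce by density (together with the uniform $L^\infty$ bound) to a convenient class of test functions, partition $(0,1)$ finely so that the profile $G(x,\cdot)$ and the period $l(x)$ can be frozen at one point per subinterval with an error small uniformly in $\epsilon$, average the frozen, genuinely periodic function, and send $\epsilon\to0$ before refining the partition. The differences are worth a brief comparison. The paper tests against characteristic functions of intervals rather than continuous functions, splits the freezing error into five explicit terms $I^1_{e,f},\dots,I^5_{e,f}$ (treating separately the replacement of $G(x,\cdot)$ by $G(\hat x_i,\cdot)$ and of $l(x)$ by $l(\hat x_i)$ inside and outside the averaged integral), and then disposes of the remaining term $I^2_{e,f}$ by citing the classical weak-$*$ convergence theorem for fixed-period oscillating functions (Theorem 2.6 of Cioranescu--Donato). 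You instead prove that step by hand, counting full periods of length $\epsilon\, l(x_j)$ inside $I_j$ and obtaining the quantitative remainder $O(\epsilon/\delta)$; this makes the argument self-contained and makes the order of limits ($\epsilon\to0$ first, then $\delta\to0$) completely transparent, which the paper achieves by noting its estimates for $I^1,I^3,I^4,I^5$ are uniform in $\epsilon$. One caveat: your claim that uniform continuity of $G$ on the compact set $[0,1]\times[0,l_1]$ furnishes a modulus for $\sup_{x\in I_j,\,y\in\R}|G(x,y)-G(x_j,y)|$ is not literally correct when the period $l(x)$ varies, since reducing $y$ modulo the period uses different periods at $x$ and at $x_j$ (consider $G(x,y)=2+\cos(2\pi y/l(x))$, for which this supremum does not shrink as $x\to x_j$); what is actually needed is continuity of $x\mapsto G(x,\cdot)$ uniformly in $y\in\R$. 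The paper assumes exactly this when it posits a partition with $\sup_i\sup_{x\in J_i,\,y\in\R}|G(x,y)-G(\hat x_i,y)|<\eta$, so you are relying on the same implicit hypothesis rather than introducing a new gap, but you should state it as a hypothesis on $G$ rather than derive it from compactness on a bounded rectangle.
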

\begin{proof}
We have to prove
$$
\int_0^1 \left\{
G_\epsilon(x) - \frac{1}{l(x)} \int_0^{l(x)} G(x,s) \, ds
\right\} \varphi(x) \, dx \to 0 \textrm{ as } \epsilon \to 0, \forall \varphi \in L^1(0,1).
$$ 
Since the set of step function is dense in $L^1(0,1)$ and any step function is a linear combination of characteristic functions, it is enough to take   
$
\varphi= \mathcal{X}_{(e,f)},
$
the characteristic function on the interval $(e,f)$, 
for $0 \leq e < f \leq 1$.
So, we have to estimate the integral
$$
I_{e,f} = \int_e^f \left\{
G_\epsilon(x) - \frac{1}{l(x)} \int_0^{l(x)} G(x,s) \, ds
\right\} dx.
$$
For this, let $\eta>0$ be a small number and let $\{ e=x_0, x_1, ..., x_n=f \}$ be a partition for the interval $(e,f)$, and 
$\hat x_i$ be a fixed point in the interval $J_i=[x_{i-1},x_i]$, $i=1, ..., n$, such that 
$$ 
\sup_i \sup_{x \in J_i, \, y \in \R}
| G(x,y) - G(\hat x_i,y) | < \eta.
$$
Observe that we can write 
$ 
I_{e,f} = \sum_{i=1}^5 I_{e,f}^i,
$
where
$$
\begin{gathered}
I_{e,f}^1 =   
\sum_{i=1}^n \int_{J_i} \left\{ G(x,x/\epsilon) - G(\hat x_i,x/\epsilon) \right\} dx, \\ 
I_{e,f}^2 = \sum_{i=1}^n \int_{J_i} \left\{ G(\hat x_i,x/\epsilon) - 
\frac{1}{l(\hat x_i)} \int_0^{l(\hat x_i)} G(\hat x_i,s) \, ds \right\} dx \\ 
I_{e,f}^3 = \sum_{i=1}^n \int_{J_i} \left\{ \frac{1}{l(\hat x_i)} \int_0^{l(\hat x_i)} G(\hat x_i,s) \, ds - 
\frac{1}{l(\hat x_i)} \int_0^{l(\hat x_i)} G(x,s) \, ds \right\} dx \\
I_{e,f}^4 = \sum_{i=1}^n \int_{J_i} \left\{ \frac{1}{l(\hat x_i)} \int_0^{l(\hat x_i)} G(x,s) \, ds - 
\frac{1}{l(\hat x_i)} \int_0^{l(x)} G(x,s) \, ds \right\} dx \\
I_{e,f}^5 = \sum_{i=1}^n \int_{J_i} \left\{ \frac{1}{l(\hat x_i)} \int_0^{l(x)} G(x,s) \, ds - 
\frac{1}{l(x)} \int_0^{l(x)} G(x,s) \, ds \right\} dx .
\end{gathered} 
$$
It is easy to estimate the integrals $I_{e,f}^1$, $I_{e,f}^3$, $I_{e,f}^4$ and $I_{e,f}^5$ to obtain
$
|I_{e,f}^1| \leq \eta \, (f-e)
$, 
$
|I_{e,f}^3| \leq \eta \, (f-e)
$, 
$
|I_{e,f}^4| \leq G_1 \, \| \hat l^\eta - l \|_{L^\infty(0,1)} \, (f-e)
$ and 
$
|I_{e,f}^5| \leq G_1 \, \left( l_1/{l_0}^2 \right) \, \| \hat l^\eta - l \|_{L^\infty(0,1)} \, (f-e)
$,
where the function $\hat l^\eta$ is the step function defined for each $\eta > 0$ by 
$
\hat l^\eta(x) = l(x_i)  \textrm{ as } x_i \in J_i.
$
Since these inequalities do not depend on $\epsilon > 0$, and
$\| \hat l^\eta - l \|_{L^\infty(0,1)} \to 0$ as $\eta \to 0$ uniformly in $\epsilon$,
we have that $I_{e,f}^1$, $I_{e,f}^3$, $I_{e,f}^4$ and $I_{e,f}^5$ goes to zero as $\eta \to 0$
uniformly in $\epsilon > 0$.
Hence, to conclude the proof, we just evaluate the integral $I_{e,f}^2$.
But this is a application of~\cite[Theorem 2.6]{cioranescu}. 
\end{proof}

The following result will also be needed. 
\begin{lemma}
\label{lem2}
Suppose that $h, \varphi \in H^{s}(\Omega)$ with $1/2 < s \leq 1$. Then, 
\begin{eqnarray} \label{SL2}
\label{limiteexplicito}
\lim_{\epsilon \to 0}\frac{1}{\epsilon}\int_{\omega_{\epsilon}} h \, \varphi \, d\xi = \int_{\Gamma} \mu \, \gamma(h) \, \gamma(\varphi) \, dS, 
\end{eqnarray}
where $\mu \in L^\infty(\Gamma)$ is given by \eqref{mu} and $\gamma: H^{s}(\Omega) \mapsto L^2(\Gamma)$ is the trace operator.
\end{lemma}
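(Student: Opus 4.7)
The plan is to first establish the identity for smooth $h,\varphi$ by a direct computation that reduces the concentrated integral to a pairing to which Lemma \ref{LGE} applies, and then to pass to general $H^s$ functions by density using the uniform estimate from Lemma \ref{lem1} and continuity of the trace $\gamma: H^s(\Omega) \to L^2(\Gamma)$, which is available precisely because $s > 1/2$.

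For the smooth case, take $h,\varphi \in C^\infty(\bar\Omega)$ and write
$$
\frac{1}{\epsilon}\int_{\omega_\epsilon} h\,\varphi\, d\xi = \int_0^1 \frac{1}{\epsilon}\int_0^{\epsilon G_\epsilon(x)} h(x,y)\,\varphi(x,y)\, dy\, dx.
$$
A first-order Taylor expansion in $y$ about $y=0$ gives $h(x,y)\varphi(x,y) = h(x,0)\varphi(x,0) + y\,R(x,y)$ with $R$ bounded on $\bar\Omega$, so using $G_0 \le G_\epsilon \le G_1$ the inner integral equals $G_\epsilon(x)\,h(x,0)\,\varphi(x,0) + O(\epsilon)$ uniformly in $x \in (0,1)$. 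Since $x \mapsto h(x,0)\varphi(x,0)$ lies in $L^1(0,1)$, Lemma \ref{LGE} then yields
$$
\int_0^1 G_\epsilon(x)\,h(x,0)\,\varphi(x,0)\, dx \;\longrightarrow\; \int_0^1 \mu(x)\,h(x,0)\,\varphi(x,0)\, dx = \int_\Gamma \mu\,\gamma(h)\,\gamma(\varphi)\, dS,
$$
proving \eqref{SL2} for smooth functions.

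For general $h,\varphi \in H^s(\Omega)$, fix $\eta > 0$ and choose $h_n,\varphi_n \in C^\infty(\bar\Omega)$ with $\|h-h_n\|_{H^s} + \|\varphi-\varphi_n\|_{H^s} < \eta$. Writing $h\varphi - h_n\varphi_n = (h-h_n)\varphi + h_n(\varphi-\varphi_n)$, Cauchy--Schwarz together with Lemma \ref{lem1} applied with $q=2$ (valid since $s>1/2$ and thus $s-1 \ge -1/2$) gives
$$
\left|\frac{1}{\epsilon}\int_{\omega_\epsilon}(h\varphi - h_n\varphi_n)\, d\xi\right| \le C\bigl(\|h\|_{H^s}+\|h_n\|_{H^s}+\|\varphi\|_{H^s}+\|\varphi_n\|_{H^s}\bigr)\,\eta,
$$
uniformly in $\epsilon$. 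The analogous boundary side estimate is controlled by $\|\mu\|_{L^\infty(\Gamma)}$ times a trace-norm bound, and is therefore also of order $\eta$. Combining the smooth case with these two $\epsilon$-uniform approximation bounds and letting $\eta \to 0$ finishes the proof.

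The main technical hurdle is ensuring that all error estimates are genuinely uniform in $\epsilon$; this rests on Lemma \ref{lem1} with $q=2$ (which is exactly what the hypothesis $s>1/2$ is designed to deliver) and on the uniform bounds $G_0 \le G_\epsilon \le G_1$, which keep the Taylor remainder truly $O(\epsilon)$ and make the approximation estimates independent of $\epsilon$.
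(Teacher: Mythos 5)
Your proof is correct and takes essentially the same route as the paper's: in the smooth case both arguments reduce the concentrated integral to $\int_0^1 G_\epsilon(x)\,h(x,0)\,\varphi(x,0)\,dx$ plus an error vanishing with $\epsilon$ (the paper via the change of variables $y=\epsilon G_\epsilon(x)z$ and uniform continuity, you via a first-order Taylor expansion with bounded remainder) and then invoke the weak-$*$ convergence of $G_\epsilon$ from Lemma \ref{LGE}. The concluding density step, which the paper only sketches, is the same $\epsilon$-uniform approximation argument you spell out using Lemma \ref{lem1} with $q=2$ and the continuity of the trace for $s>1/2$.
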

\begin{proof}
Initially, let $h$ and $\varphi$ be smooth functions defined in $\bar{\Omega}$ independent of $\epsilon$. 
Note that
$$
\frac{1}{\epsilon}\int_{\omega_{\epsilon}} h \, \varphi \, d\xi -  \int_{\Gamma} \mu \, h \, \varphi \, dS = \frac{1}{\epsilon} \int^{1}_{0} \int^{\epsilon G_\epsilon(x)}_{0} h(x,y) \, \varphi(x,y) \, dy dx - \int^{1}_{0} \mu(x) \, h(x,0) \, \varphi(x,0) \, dx.
$$
Taking $y=\epsilon G_\epsilon(x) \, z$, we obtain by  changing variables
$$
\frac{1}{\epsilon}\int^{1}_{0}\int^{\epsilon G_\epsilon(x)}_{0} h(x,y) \, \varphi(x,y) \, dy dx = \int^{1}_{0} \int^{1}_{0} h\left(x,\epsilon G_\epsilon(x) z \right) \, \varphi \left(x,\epsilon G_\epsilon(x) z \right) G_\epsilon(x) \, dz dx.
$$
Thus, adding and subtracting $\int^{1}_{0} h(x,0) \, \varphi(x,0) \, G_\epsilon(x) \, dx$, we get
\begin{eqnarray*}
\left|\frac{1}{\epsilon}\int_{\omega_{\epsilon}} h \, \varphi \, d\xi - \int_{\Gamma} \mu \, h \, \varphi \, dS \right| 
& \leq &   \left|\int^{1}_{0} h(x,0) \, \varphi(x,0) \, G_\epsilon(x) \, dx -\int^{1}_{0} \mu(x) \, h(x,0) \, \varphi(x,0) \, dx \right| \\
& + & \left|\int^{1}_{0} \int^{1}_{0} G_\epsilon(x) \left[ h\left(x,\epsilon G_\epsilon(x) z \right) \, \varphi\left(x,\epsilon G_\epsilon(x) z \right) - h(x,0) \, \varphi(x,0) \, \right]dz dx\right| .
\end{eqnarray*}

Now, since $\epsilon \, G_\epsilon(x) z \to 0$ as $\epsilon\to0$ uniformly for $(x,z) \in  [0,1]\times[0,1]$, we have 
\begin{equation} \label{Seila}
\left|\int^{1}_{0} \int^{1}_{0} G_\epsilon(x) \left[h\left(x,\epsilon G_\epsilon(x) z \right) \, \varphi\left(x,\epsilon G_\epsilon(x) z \right) -h(x,0) \, \varphi(x,0) \right] \, dz dx\right|\to 0 \qquad \mbox{as $\epsilon\to0$}.
\end{equation}
Hence, we obtain from Lemma \ref{LGE} and \eqref{Seila} that
$$
\left|\frac{1}{\epsilon}\int_{\omega_{\epsilon}} h \, \varphi \, d\xi -\int_{\Gamma} \mu \, h \, \varphi \, dS \right| \to 0 \mbox{ as $\epsilon\to0$}.
$$
Consequently,  the proof of equality \eqref{SL2} follows by density arguments and the continuity of the trace operator $\gamma$ (see \cite{necas}).
\end{proof}

\section{Abstract setting and existence of solutions} 
To write the problems \eqref{1} and \eqref{2}  in an abstract form, we first
 define  the continuous bilinear forms $a_\epsilon: H^1(\Omega) \times H^1(\Omega) \mapsto \R$, $\epsilon \geq 0$, by
\begin{equation} \label{CF}
\begin{gathered}
a_\epsilon(u,v) = \int_\Omega \nabla u \, \nabla v \, dxdy + \lambda \int_\Omega u \, v \, dxdy + \frac{1}{\epsilon} \int_{\omega_{\epsilon}} V_\epsilon \, u \,  v \, dxdy, \quad \textrm { for }  \epsilon>0, \\
a_0(u,v) = \int_\Omega \nabla u \, \nabla v \, dxdy + \lambda \int_\Omega u \, v \, dxdy +  \int_\Gamma V_0 \, u \,  v \, dxdy.
\end{gathered}
\end{equation}

We then   consider the  linear operator $A_\epsilon: H^1 \subset 
H^{-1}(\Omega) \mapsto H^{-1}(\Omega)$  defined  by the relationship
$$
\left\langle A_\epsilon u,v \right\rangle_{-1,1} = a_\epsilon(u,v), \textrm{ for all } v \in H^1(\Omega).
$$
Now, we  can write the problem \eqref{1} as 
$
A_\epsilon u = F_\epsilon(u), 
$
for $\epsilon>0$, where $F_\epsilon: H^1(\Omega) \mapsto H^{-s}(\Omega)$ with $1/2 < s <1$ is defined by
\begin{equation}  \label{FEp} 
\begin{gathered} 
F_\epsilon = F_{0,\epsilon} + F_1, \\
\left\langle F_{0,\epsilon}(u),v \right\rangle = \frac{1}{\epsilon} \int_{\omega_\epsilon}  f_0(\xi,u) \, v \, d\xi  \quad \textrm{ and } \quad 
\left\langle F_1(u),v \right\rangle = \int_\Omega f_1(\xi,u) \, v \, d\xi, \quad \textrm{ for all } v \in H^{s}(\Omega). 
\end{gathered}
\end{equation}

Similarly, we  can write the problem \eqref{2} in an abstract form as 
$
A_0 u = F(u),
$
where  $F: H^1(\Omega) \mapsto H^{-s}(\Omega)$ with $1/2 < s <1$ is defined by
\begin{equation} \label{F0}
\begin{gathered}
F = F_{0} + F_1, \\
\textrm{ $F_1$ is given by \eqref{FEp} \quad and } \quad \left\langle F_{0}(u),v \right\rangle = \int_{\Gamma} \mu \, \gamma(f_0(\xi,u)) \, \gamma(v) \, d\xi,
\quad \textrm{ for all } v \in H^{s}(\Omega),
\end{gathered}
\end{equation}
where $\mu \in L^\infty(\Gamma)$ is given by \eqref{mu} and $\gamma: H^{s}(\Omega) \mapsto L^2(\Gamma)$ is the trace operator.

\begin{lemma} \label{rem1}
There exists $\lambda^* \in \R$ independent of $\epsilon \geq 0$ such that the bilinear form $a_\epsilon$ is uniformly coercive for all $\lambda > \lambda^*$. 
\end{lemma}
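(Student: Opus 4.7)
The plan is to show that the potentially problematic potential term, $\frac{1}{\epsilon}\int_{\omega_\epsilon} V_\epsilon u^2\,dxdy$, can be absorbed uniformly in $\epsilon$ into the gradient and $L^2$ terms, so that choosing $\lambda$ large enough yields coercivity.  I would treat the case $\epsilon > 0$ first and then the case $\epsilon = 0$, showing that the same choice of $\lambda^*$ works for both.

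For $\epsilon > 0$ the strategy is Cauchy--Schwarz followed by Lemma \ref{lem1}.  Writing
$$
\left| \frac{1}{\epsilon}\int_{\omega_\epsilon} V_\epsilon\, u^2\,d\xi \right|
\le \left( \frac{1}{\epsilon}\int_{\omega_\epsilon} |V_\epsilon|^2\,d\xi \right)^{1/2}
     \left( \frac{1}{\epsilon}\int_{\omega_\epsilon} u^4\,d\xi \right)^{1/2},
$$
the first factor is uniformly bounded by \eqref{hip1}.  For the second I would apply Lemma \ref{lem1} with $q=4$ and (for instance) $s=3/4$, which satisfies $s>1/2$ and $s-1=-1/4\ge -1/q$, obtaining $\frac{1}{\epsilon}\int_{\omega_\epsilon}u^4\,d\xi\le C\|u\|_{H^{3/4}(\Omega)}^4$.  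Hence there is a constant $K$, independent of $\epsilon$, with
$$
\left| \frac{1}{\epsilon}\int_{\omega_\epsilon} V_\epsilon\, u^2\,d\xi \right|
\le K\,\|u\|_{H^{3/4}(\Omega)}^2.
$$

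Next I would invoke the standard interpolation inequality $\|u\|_{H^{3/4}(\Omega)}^2\le \|u\|_{H^1(\Omega)}^{3/2}\|u\|_{L^2(\Omega)}^{1/2}$ combined with Young's inequality: for every $\delta>0$ there is $C_\delta>0$ such that $K\|u\|_{H^{3/4}}^2\le \delta\|u\|_{H^1}^2+C_\delta\|u\|_{L^2}^2$.  Picking $\delta = 1/2$ and inserting this into $a_\epsilon(u,u)$ gives
$$
a_\epsilon(u,u)\;\ge\;\tfrac{1}{2}\|\nabla u\|_{L^2(\Omega)}^2+\bigl(\lambda-\tfrac{1}{2}-C_{1/2}\bigr)\|u\|_{L^2(\Omega)}^2.
$$
Choosing $\lambda^*:=C_{1/2}+1$ makes the right-hand side bounded below by $\tfrac{1}{2}\|u\|_{H^1(\Omega)}^2$ for every $\lambda>\lambda^*$, uniformly in $\epsilon>0$.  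For the limit form $a_0$ the same calculation applies, with $V_0\in L^2(\Gamma)$ and the trace theorem $H^{3/4}(\Omega)\hookrightarrow L^4(\Gamma)$ playing the role of Lemma \ref{lem1}; enlarging $\lambda^*$ if necessary yields the common coercivity constant.

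The only real obstacle is verifying that the bound on the concentrating potential does not blow up as $\epsilon\to 0$; this is exactly what Lemma \ref{lem1} delivers, since it converts the concentration factor $\epsilon^{-1}$ into a fixed $H^s(\Omega)$ seminorm with $s<1$, which interpolation then lets us hide inside the gradient term.  Everything else reduces to picking $\delta$ small and $\lambda$ large.
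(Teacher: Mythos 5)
Your proposal is correct and follows essentially the same route as the paper: Cauchy--Schwarz against \eqref{hip1}, Lemma \ref{lem1} with $q=4$ and $s=3/4$, interpolation of $H^{3/4}$ between $L^2$ and $H^1$, Young's inequality, and then $\delta$ small and $\lambda$ large. The only cosmetic differences are that you bound the full potential term in absolute value where the paper keeps only the negative part $(V_\epsilon)_-$, and you spell out the $\epsilon=0$ case via the trace embedding, which the paper dismisses as ``a similar argument.''
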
 
\begin{proof}
Here we will just consider the case $a_\epsilon$ with $\epsilon > 0$. A similar argument gives the result to the bilinear form $a_{0}$.
First we note that for every $\phi\in H^{1}(\Omega)$ we have
\begin{eqnarray}
\label{t1}
a_{\epsilon}(\phi,\phi) \geqslant \left\|\nabla \phi\right\|^{2}_{L^{2}(\Omega)}+\lambda \left\|\phi\right\|^2_{L^{2}(\Omega)} - \frac{1}{\epsilon} \int_{\omega_{\epsilon}}\left(V_{\epsilon}\right)_{-} \, \left|\phi\right|^2 \, d\xi, 
\end{eqnarray}
where $\left(V_{\epsilon}\right)_{-}$ is the negative part of the potential $V_{\epsilon}$ such that $V_{\epsilon} = \left(V_{\epsilon}\right)_{+} - \left(V_{\epsilon}\right)_{-}$. For this negative part we have the following bound
\begin{eqnarray*}
 \frac{1}{\epsilon} \int_{\omega_{\epsilon}} \left(V_{\epsilon}\right)_{-} \, \left|\phi\right|^2 \, d\xi  & \leqslant & \left(\frac{1}{\epsilon}\int_{\omega_{\epsilon}}\left|V_{\epsilon}\right|^2 \, d\xi \right)^{\frac{1}{2}} \left(\frac{1}{\epsilon}\int_{\omega_{\epsilon}}\left|\phi\right|^4 \, d\xi \right)^{\frac{1}{2}}  
 \leqslant  C \left(\frac{1}{\epsilon}\int_{\omega_{\epsilon}}\left|\phi\right|^4 d\xi \right)^{\frac{1}{2}}.
\end{eqnarray*}
Choosing $\frac{1}{2}<s<1$ and $s-1\geqslant -\frac{1}{4}$, that is, $\frac{3}{4}\leqslant s<1$, and using the Lemma \ref{lem1} with $q=4$, we get 
$$
\frac{1}{\epsilon}\int_{\omega_{\epsilon}}\left(V_{\epsilon}\right)_{-}\left|\phi\right|^2 \, d\xi  \leqslant C\left\|\phi\right\|^{2}_{H^{s}(\Omega)}\leqslant C \left\|\phi\right\|^{2s}_{H^{1}(\Omega)} \left\|\phi\right\|^{2(1-s)}_{L^{2}(\Omega)}.
$$
Next we can use Young's inequality to obtain 
\begin{eqnarray}
\label{t2}
\frac{1}{\epsilon}\int_{\omega_{\epsilon}}\left(V_{\epsilon}\right)_{-}\left|\phi\right|^2 d\xi  \leqslant \delta \left\|\phi\right\|^{2}_{H^{1}(\Omega)} + C_{\delta}\left\|\phi\right\|^{2}_{L^{2}(\Omega)},
\end{eqnarray}
for any $\delta>0$.
Then, it follows from (\ref{t1}) and (\ref{t2}) that
$$
a_{\epsilon}(\phi,\phi) \geqslant  \left( \lambda-(1+C_{\delta})\right)\left\|\phi\right\|^2_{L^{2}(\Omega)}+(1-\delta)\left\|\phi\right\|^{2}_{H^{1}(\Omega)}.
$$ 
Consequently, we can take $\delta>0$ small enough and $\lambda>0$ large enough such that
\begin{eqnarray}
\label{coerciva}
a_{\epsilon}(\phi,\phi)\geqslant C \left\|\phi\right\|^{2}_{H^{1}(\Omega)}, \qquad \mbox{$\forall$ $\phi\in H^{1}(\Omega)$}, 
\end{eqnarray}
with $C=C(\lambda)>0$ independent of $\epsilon$. Therefore, the bilinear form $a_{\epsilon}$, $0<\epsilon\leqslant\epsilon_{0}$, is strictly coercive.
We still note that if $V_{\epsilon}\geqslant0$ in (\ref{CF}), then we can take any $\lambda>0$.
\end{proof}

The Lemma \ref{rem1} implies that $u^\epsilon$ is a solution of \eqref{1} if only if $u^\epsilon \in H^1(\Omega)$ satisfies $u^\epsilon = A_\epsilon^{-1} F_\epsilon (u^\epsilon)$, that is, $u^\epsilon$ must be a fixed point of the nonlinear map 
$$A_\epsilon^{-1} \circ F_\epsilon: H^1(\Omega) \mapsto H^1(\Omega),$$ 
for all $\lambda > \lambda^*$. The existence of solutions of \eqref{1} follows then  from Schauder's Fixed Point Theorem.  
In a very similar way,  the solutions of the limit problem \eqref{2} can be obtained  as fixed points of the map 
$$A_0^{-1} \circ F: H^1(\Omega) \mapsto H^1(\Omega).$$ 
Note that they can also be obtained as limits of solutions of \eqref{1}, as shown in the next section.


\section{Upper semicontinuity of steady states} 

In order to obtain the upper semicontinuity of the family $u^\epsilon$, we study the behavior of the  maps $F_{\epsilon}$ and $F$ defined  in \eqref{FEp} and 
\eqref{F0}.


\begin{lemma}
\label{lem1eq}
$(1)$ If $u\in H^{1}(\Omega)$ satisfies $\| u \|_{L^\infty(\Omega)} \leq R$, then there exists $K>0$ independent of $\epsilon$ such that
$$
\sup_u \left\{ 
\left\| F(u) \right\|_{H^{-s}(\Omega)},
\left\|F_{\epsilon}(u)\right\|_{H^{-s}(\Omega)}
\right\}
\leq K, \quad \textrm{ for all } 1/2<s\leq1.
$$

$(2)$ Suppose $\| u \|_{H^1(\Omega) \cap L^\infty(\Omega)} \leq R$. Then, we have for all $1/2<s<1$
$$
\left\|F_{\epsilon}(u)-F(u)\right\|_{H^{-s}(\Omega)}\to 0 \textrm{ as } \epsilon \to 0, \textrm{ uniformly in } u.
$$

$(3)$ Also, if $u^{\epsilon}\to u$ in $H^{1}(\Omega)$, then
$
\left\|F_{\epsilon}(u^{\epsilon})-F(u)\right\|_{H^{-s}(\Omega)}\to 0 \mbox{ as $\epsilon\to0$}.
$
\end{lemma}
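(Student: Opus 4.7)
The three parts are closely linked: since $F_\epsilon$ and $F$ share the same bulk term $F_1$, every difference estimate reduces to controlling $F_{0,\epsilon}$ vs.\ $F_0$. The guiding idea is to view these as the composition of the ``concentration operator'' $w\mapsto\frac{1}{\epsilon}\int_{\omega_\epsilon}w\,\cdot\,d\xi$ (respectively $w\mapsto\int_\Gamma\mu\,\gamma(w)\,\gamma(\cdot)\,dS$) with the Nemitskii map $u\mapsto f_0(\cdot,u)$. The first factor is controlled by the technical Lemmas \ref{lem1}--\ref{lem2}, while the second is tame because the cut-off ensures that $f_0,f_1$ have bounded derivatives in the second variable.

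\textbf{Part (1).} For $F_1$, use $|f_1(\xi,u)|\leq C$ on $|u|\leq R$ and Cauchy--Schwarz to get $|\langle F_1(u),v\rangle|\leq C\|v\|_{L^2}\leq C\|v\|_{H^s}$. For $F_{0,\epsilon}$, similarly $|f_0(\xi,u)|\leq C$, and then Cauchy--Schwarz on $\omega_\epsilon$ followed by Lemma \ref{lem1} with $q=2$ gives $|\langle F_{0,\epsilon}(u),v\rangle|\leq C\bigl(\tfrac{1}{\epsilon}\int_{\omega_\epsilon}|v|^2\bigr)^{1/2}\leq C\|v\|_{H^s}$, valid uniformly in $\epsilon$. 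For $F_0$, the continuity of the trace $\gamma:H^s(\Omega)\to L^2(\Gamma)$ together with $\mu\in L^\infty(\Gamma)$ gives $|\langle F_0(u),v\rangle|\leq C\|v\|_{H^s}$.

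\textbf{Part (2) — the main point.} The plan is to apply Lemma \ref{Potential} with the constant potential $V_\epsilon\equiv 1$. Hypothesis \eqref{hip1} is immediate since $\tfrac{1}{\epsilon}|\omega_\epsilon|\leq G_1$, and hypothesis \eqref{hip2} with limit $V_0=\mu$ is precisely Lemma \ref{lem2} applied with $h\equiv 1$ (using Lemma \ref{LGE}). Lemma \ref{Potential} then yields operators $T_\epsilon:H^{s'}(\Omega)\to H^{-s}(\Omega)$, with $s'$ chosen slightly less than $1$ so that $s+s'>3/2$, such that
\begin{equation*}
\langle T_\epsilon(w),v\rangle=\frac{1}{\epsilon}\int_{\omega_\epsilon}w\,v\,d\xi,\qquad\langle T_0(w),v\rangle=\int_\Gamma\mu\,\gamma(w)\,\gamma(v)\,dS,\qquad\|T_\epsilon-T_0\|_{\mathcal{L}(H^{s'},H^{-s})}\to 0.
\end{equation*}
Since $F_{0,\epsilon}(u)=T_\epsilon(f_0(\cdot,u))$ and $F_0(u)=T_0(f_0(\cdot,u))$, it remains to check that $\{f_0(\cdot,u):\|u\|_{H^1\cap L^\infty}\leq R\}$ is a bounded set of $H^{s'}$, which is clear from $\nabla f_0(\cdot,u)=(\partial_\xi f_0)(\cdot,u)+(\partial_u f_0)(\cdot,u)\nabla u$ and the boundedness of the derivatives of $f_0$ provided by the cut-off. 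Thus $\|F_\epsilon(u)-F(u)\|_{H^{-s}}\leq\|T_\epsilon-T_0\|\cdot C(R)\to 0$ uniformly in $u$.

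\textbf{Part (3).} Split $F_\epsilon(u^\epsilon)-F(u)=\bigl[F_\epsilon(u^\epsilon)-F_\epsilon(u)\bigr]+\bigl[F_\epsilon(u)-F(u)\bigr]$. The second bracket tends to zero by the operator-norm argument of Part (2), since $u\in H^1(\Omega)$ already suffices to keep $f_0(\cdot,u)$ bounded in $H^{s'}$. In the first bracket, the Lipschitz character of $f_0,f_1$ in $u$ (again from the cut-off), combined with Lemma \ref{lem1} with $q=2$ for the $\omega_\epsilon$-integral and Cauchy--Schwarz for the $\Omega$-integral, gives
\begin{equation*}
|\langle F_\epsilon(u^\epsilon)-F_\epsilon(u),v\rangle|\leq L\|u^\epsilon-u\|_{L^2(\Omega)}\|v\|_{L^2(\Omega)}+L\,C\,\|u^\epsilon-u\|_{H^s(\Omega)}\|v\|_{H^s(\Omega)},
\end{equation*}
which vanishes because $u^\epsilon\to u$ in $H^1(\Omega)\hookrightarrow H^s(\Omega)$.

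\textbf{Expected difficulty.} The only non-routine step is the uniformity in Part (2); its resolution is to avoid any pointwise-in-$u$ application of Lemmas \ref{LGE}/\ref{lem2} and instead pass the entire $u$-dependence through the Nemitskii map, so that the $\epsilon\to 0$ convergence happens at the level of the \emph{operator} $T_\epsilon$ acting on an $H^{s'}$-bounded family.
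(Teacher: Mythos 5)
Your argument is correct in substance, and at the crucial point --- the uniformity in $u$ of part (2) --- it takes a genuinely different route from the paper. The paper proves, for each fixed $u$, the convergence $\langle F_{0,\epsilon}(u),\phi\rangle\to\langle F_{0}(u),\phi\rangle$ via Lemma \ref{lem2}, upgrades it to $H^{-s}$-norm convergence through equicontinuity and the compact embedding $H^{s}\hookrightarrow H^{s_0}$, and then obtains uniformity over $\{\|u\|_{H^1}\le R\}$ by showing that $F_{0,\epsilon}$ and $F_0$ are Lipschitz from $H^{s}$ to $H^{-s}$ uniformly in $\epsilon$, hence equi-uniformly continuous on the ball equipped with the weak topology (using the compactness of $H^1\hookrightarrow H^{s}$). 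You instead factor $F_{0,\epsilon}=T_\epsilon\circ\mathcal{N}_{f_0}$ and $F_{0}=T_0\circ\mathcal{N}_{f_0}$, where $\mathcal{N}_{f_0}$ is the Nemitskii map, and apply Lemma \ref{Potential} with the constant potential $V_\epsilon\equiv 1$, $V_0=\mu$; hypotheses \eqref{hip1} and \eqref{hip2} do hold in this case, by $|\omega_\epsilon|/\epsilon\le G_1$ and Lemma \ref{lem2} with $h\equiv 1$. The $\epsilon$-convergence then takes place in operator norm, so uniformity in $u$ is automatic on $H^{s'}$-bounded sets; this bypasses the paper's pointwise-plus-weak-topology argument and reuses Lemma \ref{Potential}, which the paper invokes only for the potential term. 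Your parts (1) and (3) coincide with the paper's treatment (Lemma \ref{lem1} with $q=2$; adding and subtracting $F_\epsilon(u)$ together with the uniform Lipschitz bound).

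One caveat: your bound on $\|f_0(\cdot,u)\|_{H^{s'}}$ invokes $\partial_\xi f_0$, while the stated hypotheses give only continuity of $f_0$ in the first variable and $\mathcal{C}^2$-regularity in the second (the cut-off controls $u$-derivatives, not $\xi$-derivatives); moreover, for $s$ close to $1/2$ you need $s'$ close to $1$, i.e.\ essentially an $H^1$ bound on $f_0(\cdot,u)$. This is not a fatal objection --- the paper's own proof applies Lemma \ref{lem2} to $h=f_0(\cdot,u)$ and thus implicitly assumes $f_0(\cdot,u)\in H^{s}(\Omega)$ for some $s>1/2$ as well --- but you should state explicitly the extra $x$-regularity of $f_0$ (for instance, Lipschitz continuity in $\xi$) that your chain-rule computation requires.
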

\begin{proof}
We can get (1) from continuity of the nonlinearities and Lemma \ref{lem1}. Part (3) follows from (2) adding and subtracting $F_\epsilon(u)$. So, we just have to prove (2). For this, it is enough consider the maps $F_{0,\epsilon}$ and $F_0$ given by \eqref{FEp} and \eqref{F0}. Initially, take $s_{0}$ satisfying $1/2< s_{0} < 1$. For each $u\in H^{1}(\Omega)$ and $\phi\in H^{s_{0}}(\Omega)$
$$
\left| \left\langle F_{0,\epsilon}(u),\phi\right\rangle-\left\langle F_{0}(u),\phi\right\rangle \right| = \left|\frac{1}{\epsilon}\int_{\omega_{\epsilon}} f_0(\xi,u(\xi)) \, \phi(\xi) \, d\xi - \int_{\Gamma} \mu(\xi) \, \gamma\left(f_0(\xi, u(\xi))\right) \, \gamma\left(\phi(\xi)\right) \, d\xi \right|.
$$
From Lemma~\ref{lem2}, we have 
$
\left\langle F_{0,\epsilon}(u),\phi\right\rangle \to \left\langle F_{0}(u),\phi\right\rangle
$
as $\epsilon\to0$, for each $\phi\in H^{s_{0}}(\Omega)$.
Also, this limit is uniform for $\phi$ in compact sets of $H^{s_{0}}(\Omega)$ since $\{ F_{0,\epsilon}(u)\in H^{-s_{0}}(\Omega): \epsilon\in(0, \epsilon_{0}] \}$ is equicontinuous
for fixed $u\in H^{1}(\Omega)$. Hence, for $1/2<s_{0}<s <1$, we have that the embedding $H^{s}(\Omega)\hookrightarrow H^{s_{0}}(\Omega)$ is compact, and  \begin{eqnarray}
\label{convpont1}
\left\|F_{0,\epsilon}(u)-F_{0}(u)\right\|_{H^{-s}(\Omega)}=
\sup_{\left\|\phi\right\|_{H^{s}(\Omega)}=1}
\left| \left\langle F_{0,\epsilon}(u)-F_{0}(u),\phi\right\rangle \right| \to 0\qquad \mbox{as $\epsilon\to0$}. 
\end{eqnarray}
Now, we show that (\ref{convpont1}) is uniform for $u\in H^{1}(\Omega)$ with $\left\|u\right\|_{H^{1}(\Omega)}\leq R$. First, observe that $F_{0,\epsilon}, F_0:H^{1}(\Omega) \mapsto H^{-s}(\Omega)$ are continuous in $H^{1}(\Omega)$ with the weak topology, for $\epsilon >0$. Indeed, it follows from Lemma~\ref{lem1} and regularity of $f_0$ that there exist $C$, $K>0$, independents of $\epsilon$, and $0\leq \theta(x)\leq 1$ with $x\in \bar{\Omega}$ such that
\begin{eqnarray*}
\left\|F_{0,\epsilon}(u)-F_{0,\epsilon}(v)\right\|_{H^{-s}(\Omega)} & \leq & C \left(\frac{1}{\epsilon}\int_{\omega_{\epsilon}} \left|\partial_u f_0(x,\theta(x)u(x)+(1-\theta(x))v(x))\right|^{2} \left|u(x)-v(x)\right|^{2} dx\right)^{\frac{1}{2}} \\
& \leq & CK \left\|u-v\right\|_{H^{s}(\Omega)}, \quad \forall u, v \in H^{s}(\Omega).
\end{eqnarray*}
Similarly, we get 
$$
\left\|F_{0}(u)-F_{0}(v)\right\|_{H^{-s}(\Omega)}  \leq CK \left\|u-v\right\|_{H^{s}(\Omega)}, \textrm{ for all } u, v \in H^{s}(\Omega).
$$ 
So, since $H^{1}(\Omega)\hookrightarrow H^{s}(\Omega)$ compactly for $s<1$, we have that $F_{0,\epsilon}$ and $F_{0}$ are continuous in $H^{1}(\Omega)$ with weak topology. Hence, $F_{0,\epsilon}$ and $F_{0}$ are uniformly continuous in $\{u\in H^{1}(\Omega) \; : \; \left\|u\right\|_{H^{1}(\Omega)}\leq R\} \subset H^{1}(\Omega)$, proving the result.
\end{proof}

\subsection{Proof of the main result} 

\begin{proof}
Now, we are in a position to prove our main result, Theorem \ref{main}. 
First, we prove that a family of solutions $u^\epsilon$ of \eqref{1} satisfying $\|u^\epsilon\|_{L^\infty(\Omega)} \leq R$ is uniformly bounded in $H^1(\Omega)$ with respect to $\epsilon$.  In fact, we have that $u^\epsilon$ satisfies 
\begin{eqnarray}
\label{formulacao1}
a_\epsilon(u^\epsilon,\varphi) = \left\langle F_{0,\epsilon}(u^\epsilon), \varphi \right\rangle + \left\langle F_{1}(u^\epsilon), \varphi \right\rangle, \quad   \forall \varphi \in H^{1}(\Omega) \quad \textrm{ and } \quad \epsilon>0.
\end{eqnarray}
Hence, if we take $\varphi = u^\epsilon$, we obtain from Lemma \ref{lem1eq} that $|a_\epsilon(u^\epsilon,u^\epsilon)| \leq K \| u^\epsilon \|_{H^{1}(\Omega)}$. Due to Lemma \ref{rem1}, we have that the bilinear form $a_\epsilon$ is uniformly coercive for $\epsilon \geq 0$ (we are using $\lambda > \lambda^*$). Thus, 
$$
C \| u^\epsilon \|_{H^{1}(\Omega)}^2 \leq |a_\epsilon(u^\epsilon,u^\epsilon)| \leq K \| u^\epsilon \|_{H^{1}(\Omega)},
$$
for some constant $C>0$ independent of $\epsilon$, which implies $\| u^\epsilon \|_{H^{1}(\Omega)} \leq K/C$ for all $\epsilon \geq 0$.

Since $u^\epsilon$ is uniformly bounded in $H^1(\Omega)$, we can extract a weakly convergent subsequence, still denoted by $u^\epsilon$,  such that $u^\epsilon \rightharpoonup u$, $w-H^1(\Omega)$, for some $u \in H^1(\Omega)$. It is easy to see that $u$ satisfies our limit problem \eqref{2}. Indeed, we use Lemma \ref{Potential} and Lemma \ref{lem1eq} to pass to the limit in (\ref{formulacao1}) as $\epsilon \to 0$, and obtain 
$$
a_0(u,\varphi) = \left\langle F_0(u), \varphi \right\rangle + \left\langle F_1(u), \varphi \right\rangle,
\textrm{ for all } \varphi \in H^{1}(\Omega).$$
Now, we can prove that $u^\epsilon \to u$ in $H^1(\Omega)$ showing the convergence of the norms. For this, we pass to the limit in 
$
a_\epsilon(u^\epsilon,u^\epsilon) = \left\langle F_{0,\epsilon}(u^\epsilon), u^\epsilon \right\rangle + \left\langle F_{1}(u^\epsilon), u^\epsilon \right\rangle
$ and we use that the norm is lower semicontinuous with respect to the weak convergence (see \cite[Proposition 1.14]{cioranescu}), that is, $\|u\|_{H^1(\Omega)} \leq \displaystyle \liminf_\epsilon \|u^\epsilon\|_{H^1(\Omega)}$, to obtain
\begin{eqnarray*}
\int_\Omega |\nabla u|^2 \, d\xi & \leq & \liminf_\epsilon \int_\Omega |\nabla u^\epsilon|^2 \, d\xi \leq \limsup_\epsilon \int_\Omega |\nabla u^\epsilon|^2 \, d\xi  \\
& \leq & - \int_\Omega \lambda |u|^2 \, d\xi - \int_\Gamma V_0 \, |u|^2 \, d\xi + \int_\Gamma \mu \, f_0(\xi,u) \, u \, d\xi + \int_\Omega f_1(\xi,u) \, u \, d\xi=\int_\Omega |\nabla u|^2 \, d\xi.
\end{eqnarray*}

\end{proof}

\section{Final conclusion}

We have shown that  the steady state solutions  of a homogeneous Neumann problem for a nonlinear reaction diffusion equation converge to a certain limit problem when some reaction and potential terms are concentrated in a small neighborhood of  of the boundary.

 In our analysis we showed  that the family of steady state  solutions  converges in $H^1$-norm to a solution $u$ of an equation of the same type, with a nonlinear boundary condition that captures both the profile and the oscillatory behavior of the
boundary. 

An important feature here is that we are dealing with the case where the boundary  presents a highly oscillatory behavior and, as a consequence, the limit problem is not obvious from the start.  It is  also worth remembering that the domain considered here is only Lipschitz continuous. 

We use some results developed in \cite{arrieta} that describe how different concentrated integrals converge to surface integrals to obtain a rigorous strong convergence result in Theorem \ref{main}.

 A natural question is whether such approximation  results can be improved  
 in order to describe  the asymptotic behavior of the Dynamical System generated by the parabolic equation associated to the problem \eqref{1} posed in more general regions of the $\R^N$. It is our goal to  investigate this question in a forthcoming paper.

\end{document}